\newcommand{\D}{\hskip 0.3mm \textup{d}}
\newcommand{\Ad}{\textup{Ad}}
\newcommand{\ad}{\textup{ad}}
\newcommand{\GL}{\textup{GL}}
\newcommand{\tr}{\textup{tr}}
\newcommand{\Div}{\textup{div}}
\newcommand{\const}{\textup{const}}
\newcommand{\hb}[1]{E_{#1}}
\newcommand{\bhk}{\Phi}
\newtheorem{thm}{Theorem}[chapter]
\newtheorem*{thm*}{Theorem}
\newtheorem{lem}[thm]{Lemma}
\newtheorem{prop}[thm]{Proposition}
\theoremstyle{definition}
\newtheorem{rmk}[thm]{Remark}
\numberwithin{equation}{chapter}
\title{Local monotonicity for the Yang-Mills-Higgs flow}
\author{Ahmad Afuni}
\date{\vskip -5ex}
\begin{document}
\maketitle
\begin{abstract}A local monotonicity formula for the Yang-Mills-Higgs flow on $G$-bundles over $\mathbb{R}^n$ ($n>4$) is proved. It is shown that the monotone quantity co\"incides on certain self-similar solutions with that appearing in existing non-local monotonicity formul\ae\ for the Yang-Mills and Yang-Mills-Higgs flows.
\end{abstract}
\chapter{Introduction}
This paper is concerned with local monotonicity properties satisfied by the Yang-Mills-Higgs flow on $\mathbb{R}^{n}$ with \textit{trivial} underlying principal bundle, where we assume throughout that $n>4$. We begin with a brief account of Yang-Mills-Higgs theory in this setting and refer the reader to \cite{MR614447} and the references therein for details.

Let $G$ be a connected compact finite-dimensional semisimple Lie group with Lie algebra $(\mathfrak{g},\left[\cdot,\cdot\right])$. Denote the \textit{adjoint representation} of $G$ on $\mathfrak{g}$ by $\Ad$ and that of $\mathfrak{g}$ on $\mathfrak{g}$, viz. the representation obtained by differentiating $\Ad$ at the identity, by $\ad$. The Lie algebra $\mathfrak{g}$ together with the \textit{negative Killing form}
\begin{align*}
	\mathfrak{g}\times\mathfrak{g}\ni A\times B\mapsto \left<A,B\right>:=-\tr\left(\ad_{A}\circ\ad_{B}\right)
\end{align*}
forms an inner product space and $\left<\cdot,\cdot\right>$ is $\Ad$-invariant, i.e. for all $g\in G$ and $A,B\in\mathfrak{g}$,
\begin{align*}
	\left<\Ad_{g}A,\Ad_{g}B\right>=\left<A,B\right>.
\end{align*}
In this setting, a \textit{gauge field} or \textit{connection} on $\mathbb{R}^n\times G$ is given by a $\mathfrak{g}$-valued one-form on $\mathbb{R}^n$ which we shall write as 
\begin{align*}
	A=\sum_{i=1}^{n}A_{i}\otimes \D x^{i}:\mathbb{R}^n\rightarrow \mathfrak{g}\otimes T^\ast \mathbb{R}^n
\end{align*}
and has \textit{field strength} or \textit{curvature} given by the $\mathfrak{g}$-valued two-form
\begin{align*}
	F=\sum_{i<j}F_{ij}\otimes \D x^i \wedge \D x^j,
\end{align*}
where $F_{ij}=\partial_i A_j - \partial_j A_i + [A_i , A_j]$. It is clear that $F_{ij}$ is antisymmetric and a straightforward computation shows that the \textit{Bianchi identity}
\begin{align}\label{bianchi}
	\nabla_i F_{jk} + \nabla_j F_{ki} + \nabla_{k}F_{ij}=0
\end{align}
holds, where $\nabla_i$ denotes the $i$th \textit{gauge-covariant partial derivative} of a $\mathfrak{g}$-valued function defined by
\begin{align*}
	\nabla_i = \partial_i + \left[A_i , \cdot\ \right]
\end{align*}
and $\partial_{i}$ is the usual $i$th partial derivative of a vector-valued function; this differential operator is \textit{compatible} with the inner product $\left<\cdot,\cdot\right>$ in the sense that for all $i\in\left\{1,\dots, n\right\}$ and $\mathfrak{g}$ valued functions $X$ and $Y$,
\begin{align}
	\partial_{i}\left<X,Y\right>=\left<\nabla_{i}X,Y\right> + \left<X,\nabla_{i}Y\right>.\label{gcompat}
\end{align}
We also suppose given a (smooth) representation $\rho:G\rightarrow \GL(V)$ on a finite-dimensional inner product space $(V,\left<\cdot,\cdot\right>)$ and assume that $\left<\cdot,\cdot\right>$ is \textit{$\rho$-invariant}. A \textit{scalar field} is a smooth function $u:\mathbb{R}^n\rightarrow V$; for such fields, the $i$th \textit{gauge-covariant partial derivative} is defined by
\begin{align*}
	\slashed{\nabla}_i u = \partial_i u + A_{i}\cdot u,
\end{align*}
where $\cdot$ is the action of $\mathfrak{g}$ on $V$ obtained by differentiating $\rho$ at the identity of $G$. This differential operator satisfies the identity
\begin{align}
	\left(\slashed{\nabla}_{i}\slashed{\nabla}_{j} - \slashed{\nabla}_j \slashed{\nabla}_i\right)u = F_{ij}\cdot u\label{vcurv}
\end{align}
and is likewise compatible with the inner product $\left<\cdot,\cdot\right>$ on $V$, viz. for all $i\in\left\{1,\dots,n\right\}$ and $V$-valued functions $u,v$,
\begin{align}
	\partial_{i}\left<u,v\right>=\left<\slashed{\nabla}_{i}u,v\right> + \left<u,\slashed{\nabla}_{i}v\right>.\label{vcompat}
\end{align}

A pair $(A,u)$ consisting of a gauge field $A$ and scalar field $u$ is said to be a \textit{Yang-Mills-Higgs pair} with potential $W\in C^{1}(\mathbb{R},\mathbb{R}^{+})$ if the equations
\begin{equation}
	\begin{split}
		-\sum_{i=1}^{n}\nabla_{i}F_{ij} + u\odot \slashed{\nabla}_{j}u &=0\\
	-\sum_{i=1}^{n}\slashed{\nabla}_{i}^{2}u + 2W'(|u|^{2})u&=0\end{split}\label{ymhe}\tag{YMHE}
\end{equation}
hold on $\mathbb{R}^{n}$, where $\odot:V\times V\rightarrow \mathfrak{g}$ is the unique bilinear form satisfying the relation
\begin{align}
	\left<X,u_{1}\odot u_{2}\right> = \left<X\cdot u_{1}, u_{2}\right>\label{dotadj}
\end{align}
for all $X\in \mathfrak{g}$ and $u_{1},u_{2}\in V$. The equations (\ref{ymhe}) arise as the Euler-Lagrange equations of the energy density
\begin{align*}
	e(A,u)&= \frac{1}{2}\left(\sum_{i<j}|F_{ij}|^2 + \sum_i |\slashed{\nabla}_i u|^2  \right) + W\circ |u|^2.
\end{align*}
A key feature of this energy density and in fact the system (YMHE) is that they are invariant under \textit{gauge transformations} $g:\mathbb{R}^{n}\rightarrow G$ which act on pairs $(A,u)$ according to the rules
\begin{align*}
g\cdot u &= \rho(g)u\\
\intertext{and}
g\cdot A_{i}&= \Ad_{g}(A_{i}) - \partial_{i} g \cdot g^{-1},
\end{align*}
for all $i\in\{1,\dots,n\}$, where in the last line $\cdot$ denotes right translation in the tangent bundle of $G$; explicitly, $e(g\cdot A,g\cdot u)=e(A,u)$ and if $(A,u)$ is a Yang-Mills-Higgs pair, so is $(g\cdot A, g\cdot u)$.

Yang-Mills-Higgs pairs were introduced by Higgs \cite{MR0175554} as a generalization of \textit{Yang-Mills fields} (pairs of the form $(A,0)$ with $W\equiv 0$). They were subsequently studied in a mathematical context by Taubes and others (cf. \cite{taubes1985} and the references therein).

In studying (\ref{ymhe}), it is natural to consider the corresponding flow: A smooth one-parameter family of pairs $\{(A(\cdot,t),u(\cdot,t)\}_{t\in\left[a,b\right[}$ is said to evolve by the \textit{Yang-Mills-Higgs flow} if the equations
\begin{equation}
	\begin{split}\partial_{t}A_{j}&=\sum_{i=1}^{n}\nabla_{i}F_{ij} - u\odot \slashed{\nabla}_{j}u\\
	\partial_{t}u&=\sum_{i=1}^{n}\slashed{\nabla}_{i}^{2}u - 2(W'\circ|u|^{2})\cdot u\end{split}\label{ymhf}\tag{YMHF}
\end{equation}
hold on $\mathbb{R}^{n}\times\left]a,b\right[$. This flow was first introduced by Hassel \cite{Hassell1993431} for $n=3$ and subsequently studied by various others in more general geometric settings (cf. \cite{MR3143906} and the references therein). As with the Yang-Mills-Higgs equations, the Yang-Mills-Higgs flow is also invariant with respect to gauge transformations provided that they do not depend on $t$. Moreover, if $W\equiv 0$ and a one-parameter family of pairs of the form $\{(A(\cdot,t),0)\}_{t\in\left]a,b\right[}$ satisfies the equations (\ref{ymhf}), we say that $A$ evolves by the \textit{Yang-Mills flow}; this flow was first suggested by Atiyah and Bott \cite{MR702806} and subsequently studied by various others, ultimately motivating the consideration of the Yang-Mills-Higgs flow.

A crucial tool in the study of the long-time behaviour of solutions to (\ref{ymhf}) with $b=\infty$ in dimensions greater than $4$ (cf. \cite{MR2099188}) is a monotonicity formula due to Hong \cite{hong1998monotonicity}, a generalization of a monotonicity formula for the Yang-Mills flow due to Chen and Shen \cite{cs1994monotonicity} which was in turn motivated by one for the harmonic map heat flow due to Struwe \cite{struwe1988}; all of these formul\ae\ are akin to the identity
	\begin{align*}
		\frac{\D}{\D t}\int_{\mathbb{R}^{n}}v(x,t)\cdot\Gamma_{(X,T)}(x,t)\D x=0
	\end{align*}
	satisfied by solutions $v:\mathbb{R}^{n}\times\mathbb{R}^{+}\rightarrow\mathbb{R}$ to the heat equation of appropriate growth at infinity, where $(X,T)\in\mathbb{R}^{n}\times\mathbb{R}^{+}$ and
	\begin{align*}
		\Gamma_{(X,T)}(x,t)=\frac{1}{\left(4\pi(T-t) \right)^{n/2}}\exp\left(\frac{|x-X|^{2}}{4(t-T)} \right).
	\end{align*}
If $\{(A(\cdot,t),u(\cdot,t))\}_{t\in\left]a,b\right[}$ evolves by the Yang-Mills-Higgs flow and is of appropriate growth at infinity, Hong's formula takes the form
		\begin{equation}
			\begin{split}
			&\frac{\D}{\D t}\left((4\pi(T-t))^{2}\int_{\mathbb{R}^{n}}e(A,u)(x,t)\Gamma_{(X,T)}(x,t)\D x  \right)\\
			&=-(4\pi(T-t)^{2})\int_{\mathbb{R}^{n}}\sum_{j}\left|\partial_{t}A_{j} + \sum_{k}\frac{(x-X)^{k}}{2(t-T)}F_{kj}\right|^{2} + \left|\partial_{t}u + \sum_{k}\frac{(x-X)^{k}}{2(t-T)}\slashed{\nabla}_{k}u\right|^{2}\D x\\
			&\hskip 50mm +4\pi(T-t)^{2}\int_{\mathbb{R}^{n}}\frac{\sum_{j}|\slashed{\nabla}_{j}u|^{2} + 4W\circ |u|^{2} }{2(t-T)}\D x\end{split}\label{hongmon}
		\end{equation}
		on $\left]a,T\right[$ for $T\in\left]a,b\right]$ and $X\in\mathbb{R}^{n}$.
		
		In contrast to this formula, monotonicity formul\ae\ for (almost) elliptic problems--- such as that for harmonic maps due to Schoen and Uhlenbeck \cite{MR664498}, that for Yang-Mills connections due to Price \cite{price1983monotonicity} and that for minimal surfaces due to Allard \cite{MR0307015}--- tend to be local in nature, being akin to the mean-value property of solutions to Laplace's equation. Less widely known is that an analogous local formula exists for solutions to the heat equation \cite{fulks1966mean,MR0315289}: If $E_{r}(X,T)=\left\{\Gamma_{(X,T)}>\frac{1}{r^{n}}\right\}$ with $r\in\left]0,\sqrt{4\pi T}\right[$ and $(X,T)\in\mathbb{R}^{n}\times\mathbb{R}^{+}$, and $v\in C^{2}(\mathbb{R}^{n}\times\mathbb{R}^{+})$ solves the heat equation, then
\begin{align*}
	\frac{\D}{\D r}\left(\frac{1}{r^{n}}\iint_{E_{r}(X,T)}\frac{|x-X|^{2}}{4(t-T)^{2}}v(x,t)\D x\D t\right)=0.
\end{align*}
Motivated by this formula, it was shown by Ecker \cite{MR1865979,ecker2005local} that analogues of the minimal surface and harmonic map monotonicity formul\ae\ modelled on this formula may be established for the mean curvature flow, harmonic map heat flow and a certain class of reaction-diffusion equations provided the \textit{heat ball} $E_{r}(X,T)$ is appropriately modified. Moreover, it was shown that, when evaluated on special solutions of each of the aforementioned flows, the local quantity co\"incides with the global one.

The purpose of this paper is to establish a local analogue of Hong's monotonicity formula for the Yang-Mills-Higgs flow, first concentrating as in \cite{ecker2005local} on certain self-similar solutions, then establishing a local monotonicity formula more generally.

\vskip 2mm
\noindent{\small\textit{Acknowledgements.} This research was mostly carried out as part of the author's doctoral thesis at the Free University of Berlin under the supervision of Klaus Ecker, to whom much gratitude is due. The author gratefully acknowledges financial support from the Max Planck Institute for Gravitational Physics and the Leibniz Universit\"at Hannover.}
\chapter{Scaling properties, heat balls and self-similar solutions}\label{scaling}
We first recall some facts pertaining to weighted backward heat kernels and heat balls from \cite{ecker2005local} in a form suitable for our purposes. Fix $(X,T)\in\mathbb{R}^{n}\times\mathbb{R}$ and let $\bhk_{(X,T)}:\mathbb{R}^{n}\times\left]-\infty,T\right[\rightarrow\mathbb{R}^{+}$ be defined by
\begin{align*}
	\bhk_{(X,T)}(x,t)=\frac{1}{\left(4\pi(T-t)\right)^{\frac{n-4}{2}}}\exp\left(\frac{|x-X|^{2}}{4(t-T)} \right).
\end{align*}
For each $r>0$, we introduce the so-called \textit{weighted} heat ball of radius $r$ centred at $(X,T)$ by
\begin{align*}
	\hb{r}(X,T)&=\left\{(x,t)\in\mathbb{R}^{n}\times\left]-\infty,T\right[:\bhk_{(X,T)}(x,t)>\frac{1}{r^{n-4}}\right\}.
\end{align*}
A quick computation shows that in fact,
\begin{align*}
	\hb{r}(X,T)&=\bigcup_{t\in\left]T-\frac{r^{2}}{4\pi},T\right[}B_{R_{r}(t-T)}(X)\times\{t\}
\end{align*}
with $R_{r}(\tau)=\sqrt{2(n-4)\tau\log\left(-\frac{4\pi\tau}{r^{2}}\right)}$ for $\tau\in\left]-\frac{r^{2}}{4\pi},0\right[$. Note that $R_{r}(\tau)\leq c_{n}r$ with $c_{n}=\sqrt{\frac{n-4}{2\pi e}}$.

One useful property of these heat balls is that the integral of an appropriately scale-invariant function weighted against $\bhk$ on $\mathbb{R}^{n}$ may be written directly in terms of its integral on a heat ball with an appropriate weight function. This is the content of the following proposition.
\begin{prop}[{\cite[Proposition 1.5]{ecker2005local}}]\label{scalingprop} Suppose $f:\mathbb{R}^{n}\times\left]-\infty,T\right[\rightarrow\mathbb{R}^{+}$ is a measurable function such that $f(X+r(x-X),T+r^{2}(t-T))=r^{-4}f(x,t)$ for all $r>0$ and $(x,t)\in \mathbb{R}^{n}\times\left]-\infty,T\right[$. Then for all $t\in\left]-\infty,T\right[$ and $r>0$,
\begin{align*}
	\int_{\mathbb{R}^{n}}f(x,t)\cdot\bhk_{(X,T)}(x,t)\D x=\frac{1}{r^{n-4}}\iint_{\hb{r}(X,T)}f(y,s)\cdot\frac{n-4}{2(T-s)}\D y\D s.
\end{align*}
\end{prop}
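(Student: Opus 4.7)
The plan is to exploit the parabolic scaling inherent in the hypothesis on $f$ to reduce the claim, first to the normalised case $(X,T)=(0,0)$ by translation, and then to a single explicit identity to be verified by Tonelli's theorem.

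After the translation, both sides of the identity are in fact constant in their respective parameters. On the right, the parabolic rescaling $(y,s)=(ry',r^{2}s')$ sends $\hb{r}(0,0)$ onto $\hb{1}(0,0)$ with Jacobian $r^{n+2}$; the hypothesis gives $f(y,s)=r^{-4}f(y',s')$ and $(-s)^{-1}=r^{-2}(-s')^{-1}$, so combined with the prefactor $r^{-(n-4)}$ every power of $r$ cancels, showing independence of $r$. On the left, for $t<0$ the substitution $x=\sqrt{-t}\,z$ together with $f(x,t)=(-t)^{-2}f(z,-1)$ produces a net factor $(-t)^{-2+n/2-(n-4)/2}=(-t)^{0}=1$, so the Gaussian integral equals $\int_{\mathbb{R}^{n}}f(z,-1)\bhk_{(0,0)}(z,-1)\,\D z$ regardless of $t$.

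It thus suffices to establish the reduced identity
\begin{align*}
	\int_{\mathbb{R}^{n}}f(z,-1)\,\bhk_{(0,0)}(z,-1)\,\D z=\iint_{\hb{1}(0,0)}f(y,s)\cdot\frac{n-4}{-2s}\,\D y\,\D s.
\end{align*}
I would verify this via the change of variables $(y,s)=(\sqrt{\sigma}\,z,-\sigma)$ on the right, whose Jacobian is $\sigma^{n/2}$. Unpacking $\bhk_{(0,0)}(y,s)>1$ using $|y|^{2}/(4s)=-|z|^{2}/4$ shows that $\hb{1}(0,0)$ corresponds in the new variables to $\{0<\sigma<\sigma^{\ast}(|z|)\}$ with $\sigma^{\ast}(|z|):=(4\pi)^{-1}\exp(-|z|^{2}/(2(n-4)))$. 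The hypothesis gives $f(\sqrt{\sigma}\,z,-\sigma)=\sigma^{-2}f(z,-1)$, and Tonelli reduces the inner integration to $\int_{0}^{\sigma^{\ast}(|z|)}\sigma^{n/2-3}\,\D\sigma=\frac{2}{n-4}\sigma^{\ast}(|z|)^{(n-4)/2}$; combined with the prefactor $(n-4)/2$ and substitution of $\sigma^{\ast}(|z|)$, the outer integrand collapses to $f(z,-1)\cdot(4\pi)^{-(n-4)/2}e^{-|z|^{2}/4}=f(z,-1)\bhk_{(0,0)}(z,-1)$, matching the left-hand side. The whole argument is essentially bookkeeping of exponents; the hypothesis $n>4$ enters both to guarantee integrability of $\sigma^{n/2-3}$ at $\sigma=0$ and to ensure that $(n-4)/2>0$ so that extracting $\sigma^{\ast}(|z|)^{(n-4)/2}$ reproduces $\bhk_{(0,0)}(z,-1)$.
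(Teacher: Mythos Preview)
Your argument is correct. Note, however, that the paper does not supply its own proof of this proposition: it is simply quoted as \cite[Proposition 1.5]{ecker2005local} and the text moves on immediately. Your reduction to $(X,T)=(0,0)$, the verification that each side is constant in its free parameter via parabolic rescaling, and the final change of variables $(y,s)=(\sqrt{\sigma}\,z,-\sigma)$ together with Tonelli constitute a complete and self-contained proof, with the exponent bookkeeping and the use of $n>4$ handled correctly.
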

We now turn our attention to one-parameter families of pairs $\{(A(\cdot,t),u(\cdot,t))\}_{t\in\left]-\infty,T\right[}$. Define for $r>0$ the rescaled family of pairs $\{(A^{r}(\cdot,t),u^{r}(\cdot,t))\}_{t\in\left]-\infty,T\right[}$ by
		\begin{align}
			A_{i}^{r}(x,t)&=r A_{i}(X + r(x-X),T + r^{2}(t-T))\ \forall i\in\{1,\dots,n\}\ \textup{and}\nonumber\\
			u^{r}(x,t)&= u(X + r(x-X),T + r^{2}(t-T)).\label{scaledpair}
		\end{align}
		Now, \textit{self-similar solutions} about $(X,T)$ are characterised by the condition that $A^{r}_{i}\equiv A_{i}$ for all $i\in\{1,\dots,n\}$ and $u^{r}\equiv u$ for all $r>0$. By differentiating these equations at $r=1$ and dividing through by $2(t-T)$, we obtain the identities
		\begin{align}
			\partial_{t}A_{i}(x,t) + \frac{1}{2(t-T)}A_{i}(x,t) + \sum_{k=1}^{n}\frac{(x-X)^{k}}{2(t-T)}\partial_{k}A_{i}(x,t)&=0\\
			\intertext{and}
			\partial_{t}u(x,t) + \sum_{k=1}^{n}\frac{(x-X)^{k}}{2(t-T)}\partial_{k}u(x,t)&=0
		\end{align}
		for all $i\in\{1,\dots,n\}$ and $(x,t)\in\mathbb{R}^{n}\times\left]-\infty,T\right[$, which also characterise self-similarity. Better still, these equations may be cast in the form
		\begin{align*}
			\mathcal{S}^{A}_{i}(x,t):=\partial_{t}A_{i}(x,t) + \sum_{k=1}^{n}\frac{(x-X)^{k}}{2(t-T)}F_{ki}(x,t)&=0\\
			\intertext{and}
			\mathcal{J}^{u}(x,t):=\partial_{t}u(x,t) + \sum_{k=1}^{n}\frac{(x-X)^{k}}{2(t-T)}\slashed{\nabla}_{k}u(x,t)&=0
		\end{align*}
		after passing to a \textit{radial gauge} in which $\sum_{i}(x-X)^{i}A_{i}(x,t)=0$ for all $(x,t)\in\mathbb{R}^{n}\times\left]-\infty,T\right[$  (cf. \cite{MR2034580}). Thus, it may be read off from (\ref{hongmon}) that if $\{(A(\cdot,t),u(\cdot,t))\}_{t\in\left]-\infty,T\right[}$ evolves by the Yang-Mills-Higgs flow, is of appropriate growth at infinity, self similar about $(X,T)$ and the conditions $W\circ |u|^{2}=0$ and $\slashed{\nabla}_{i}u=0$ hold, then $\int_{\mathbb{R}^{n}}e(A,u)(x,t)\cdot\bhk_{(X,T)}(x,t)\D x$ is independent of $t$. A closer look at the Yang-Mills-Higgs energy density $e(A,u)$ implies more, however.
		
		Denoting the gauge-covariant derivative on scalar fields and the curvature induced by $A^{r}$ by $\slashed{\nabla}^{r}$ and $F^{r}$ respectively, we see that
		\begin{equation}
			\begin{aligned}
			&e(A,u)(X+r(x-X),T+r^{2}(t-T))\\
			&\qquad=\left(r^{-4}\cdot\frac{1}{2}\sum_{i<j}|F_{ij}^{r}|^{2} + r^{-2}\cdot\frac{1}{2}\sum_{i}|\slashed{\nabla}^{r}_{i}u^{r}|^{2} + W\circ |u^{r}|^{2}\right)(x,t).
		\end{aligned}\label{edident}
		\end{equation}
		\begin{prop}\label{speciallocmon} Suppose the one-parameter family of pairs $\{(A(\cdot,t),u(\cdot,t))\}_{t\in\left]-\infty,T\right[}$ is such that $A$ is self similar about $(X,T)$, viz. $A^{r}\equiv A$ for all $r>0$, $\slashed{\nabla}_{i}u\equiv 0$ for all $i\in\{1,\dots,n\}$ and $W\circ |u|^{2}\equiv 0$. Then for all $t\in\left]-\infty,T\right[$ and $r>0$,
	\begin{align*}
		\int_{\mathbb{R}^{n}}e(A,u)(x,t)\cdot\bhk_{(X,T)}(x,t)\D x=\frac{1}{r^{n-4}}\iint_{\hb{r}(X,T)}e(A,u)(y,s)\cdot\frac{n-4}{2(T-s)}\D y\D s.
	\end{align*}
		\end{prop}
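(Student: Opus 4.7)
The plan is to deduce the identity from Proposition~\ref{scalingprop} by verifying that, under the stated hypotheses, the energy density $e(A,u)$ satisfies the required parabolic scaling $e(A,u)(X+r(x-X),T+r^{2}(t-T))=r^{-4}e(A,u)(x,t)$ for all $r>0$. Once this is shown, applying Proposition~\ref{scalingprop} with $f=e(A,u)$ immediately yields the claimed equality.

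To establish the scaling relation, the starting point is identity~(\ref{edident}), which already expresses the left-hand side in terms of the rescaled quantities $F^{r}$, $\slashed{\nabla}^{r}u^{r}$ and $|u^{r}|^{2}$, weighted by $r^{-4}$, $r^{-2}$ and $1$ respectively. I would then check each term in turn: since $A^{r}\equiv A$ by self-similarity, the curvature satisfies $F^{r}\equiv F$; since $u^{r}(x,t)=u(X+r(x-X),T+r^{2}(t-T))$, we get $W\circ|u^{r}|^{2}\equiv W\circ|u|^{2}\equiv 0$; and a short computation using $\partial_{i}u^{r}(x,t)=r(\partial_{i}u)(X+r(x-X),T+r^{2}(t-T))$ together with $A^{r}_{i}=A_{i}\circ\phi_{r}$ (where $\phi_{r}$ denotes the parabolic dilation) yields
\begin{align*}
\slashed{\nabla}^{r}_{i}u^{r}(x,t) = r\,(\slashed{\nabla}_{i}u)(X+r(x-X),T+r^{2}(t-T)) \equiv 0.
\end{align*}
Substituting these three observations into (\ref{edident}) reduces it to $e(A,u)(X+r(x-X),T+r^{2}(t-T))=r^{-4}\cdot\tfrac{1}{2}\sum_{i<j}|F_{ij}(x,t)|^{2}$.

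To close the argument, I would then note that the very same hypotheses force $e(A,u)(x,t)=\tfrac{1}{2}\sum_{i<j}|F_{ij}(x,t)|^{2}$, since the $|\slashed{\nabla}_{i}u|^{2}$ and $W\circ|u|^{2}$ contributions to the energy density vanish identically. Comparing the two expressions gives the scaling $e(A,u)\circ\phi_{r}=r^{-4}e(A,u)$, and Proposition~\ref{scalingprop} applied to $f=e(A,u)$ finishes the proof.

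There is no real obstacle here: the proposition is essentially a corollary of Proposition~\ref{scalingprop} once one observes that the two terms in $e(A,u)$ which scale differently from $|F|^{2}$ are precisely those killed by the hypotheses $\slashed{\nabla}_{i}u\equiv 0$ and $W\circ|u|^{2}\equiv 0$. The only point requiring care is the verification that the rescaled quantities $\slashed{\nabla}^{r}_{i}u^{r}$ and $W\circ|u^{r}|^{2}$ vanish (and not merely the unscaled ones), but this is immediate from the chain rule together with $A^{r}\equiv A$.
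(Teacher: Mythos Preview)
Your approach is correct and essentially identical to the paper's, which likewise observes that self-similarity gives $F^{r}\equiv F$, so (\ref{edident}) collapses to the required $r^{-4}$-scaling of $e(A,u)$, and then invokes Proposition~\ref{scalingprop}. One minor slip: you write $A^{r}_{i}=A_{i}\circ\phi_{r}$, but by definition $A^{r}_{i}=r\cdot A_{i}\circ\phi_{r}$; this factor of $r$ is precisely what produces $\slashed{\nabla}^{r}_{i}u^{r}=r\,(\slashed{\nabla}_{i}u)\circ\phi_{r}$.
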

		\begin{proof} We note first that since $A$ is self similar about $(X,T)$, $F^{r}\equiv F$ for all $r>0$. Hence, by (\ref{edident}),
			\begin{align*}
				e(A,u)(X+r(x-X),T+r^{2}(t-T))=r^{-4}\cdot\frac{1}{2}\sum_{i<j}|F_{ij}|^{2}(x,t)=r^{-4}e(A,u)(x,t).
			\end{align*}
			Proposition \ref{scalingprop} then implies the claim. 
		\end{proof}
\chapter{Local monotonicity more generally}
Fix $a,T\in\mathbb{R}$ and $X\in\mathbb{R}^{n}$ and let $c_{n}$, $\mathcal{S}^{A}_{i}$ and $\mathcal{J}^{u}$ be as in \S\ref{scaling}. We first begin with a lemma that shall guarantee the finiteness of the singular integrals occurring in the local monotonicity formula; this should be compared with \cite[Appendix]{ecker2005local}.
\begin{lem}\label{estimates} If $\{(A(\cdot,t),u(\cdot,t))\}_{t\in\left[a,T\right[}$ evolves by the Yang-Mills-Higgs flow, then for all $r\in\left]0,\sqrt{4\pi (T-a)}\right[$ the estimates
	\begin{equation}
		\begin{split}
		&\iint_{\hb{r}(X,T)}\left(\sum_{j=1}^{n}\left|\mathcal{S}^{A}_{j}\right|^{2} + \left|\mathcal{J}^{u}\right|^{2}\right)(x,t)\D x\D t\\
		&\hskip 10mm \leq 2\widetilde{c}(n,\eta)r^{-2}\int_{T-\frac{r^{2}}{4\pi}}^{T}\int_{B_{2c_{n}r}(X)}e(A,u)(x,t)\D x\D t\\
		&\hskip 60mm +2\int_{B_{2c_{n}r}(X)}e(A,u)(x,T-\frac{r^{2}}{4\pi})\D x\end{split}\label{scalebound}
	\end{equation}
	and
	\begin{equation}
		\begin{split}
			&\frac{1}{R_{r}(t-T)^{n-4}}\int_{B_{R_{r}(t-T)}(X)}e(A,u)(x,t)\D x\\
		&\hskip 10mm \leq (4\pi)^{\frac{n-4}{2}}\exp(\frac{1}{4})\left(\frac{\widetilde{c}(n,\eta)}{r^{n-2}}\int_{T-\frac{r^{2}}{4\pi}}^{T}\int_{B_{2c_{n}r}(X)}e(A,u)(x,t)\D x\D t\right.\\
		&\hskip 60mm \left. + \frac{1}{r^{n-4}}\int_{B_{2c_{n}r}(X)}e(A,u)(x,T-\frac{r^{2}}{4\pi})\D x\right)\end{split}\label{energybound}
	\end{equation}
	hold, the latter for $t\in\left]T-e^{-\frac{1}{2(n-4)}}\frac{r^{2}}{4\pi},T\right[$, where $\widetilde{c}(n,\eta)$ is a constant depending only on $n$ and a smooth function $\eta:\mathbb{R}\rightarrow\left[0,1\right]$ with $\left.\eta\right|_{\left]-\infty,\frac{1}{2}\right[}\equiv 0$ and $\left.\eta\right|_{\left[1,\infty\right[}\equiv 1$.
\end{lem}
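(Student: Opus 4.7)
The plan is to derive a localized, $\bhk_{(X,T)}$-weighted analogue of the pointwise identity whose integration over $\mathbb{R}^n$ produces Hong's formula (\ref{hongmon}), then to test it against a squared radial cutoff constructed from $\eta$ and supported in $B_{2c_n r}(X)$. Using the YMHF equations (\ref{ymhf}), the compatibility relations (\ref{gcompat})--(\ref{vcompat}), the identity (\ref{dotadj}), and $\partial_t F_{ij}=\nabla_i\partial_t A_j-\nabla_j\partial_t A_i$, one obtains after a careful computation---essentially Hong's argument kept in pointwise form---the identity
\begin{align*}
\partial_t\bigl(e(A,u)\,\bhk_{(X,T)}\bigr)+\bigl(|\mathcal{S}^A|^2+|\mathcal{J}^u|^2\bigr)\bhk_{(X,T)}=\tfrac{|\slashed{\nabla}u|^2+4W\circ|u|^2}{2(t-T)}\bhk_{(X,T)}+\sum_{i=1}^n\partial_i\mathcal{V}^i,
\end{align*}
where $\mathcal{V}$ is a flux vector satisfying $|\mathcal{V}|\leq C(|\mathcal{S}^A||F|+|\mathcal{J}^u||\slashed{\nabla}u|)\bhk_{(X,T)}$ and whose first right-hand-side term is non-positive for $t<T$.

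Define the cutoff $\zeta_r\in C_c^\infty(\mathbb{R}^n)$ by $\zeta_r(x):=1-\eta(|x-X|/(c_n r)-1)$, so that $\zeta_r\equiv 1$ on $B_{c_n r}(X)$, $\zeta_r\equiv 0$ outside $B_{2c_n r}(X)$, and $|\nabla\zeta_r|\leq C(\eta)/r$ is supported in the annulus $B_{2c_n r}(X)\setminus B_{c_n r}(X)$. Multiplying the identity by $\zeta_r^2$, integrating over $\mathbb{R}^n$, applying the divergence theorem to the flux term, using Young's inequality on the resulting $-2\zeta_r\nabla\zeta_r\cdot\mathcal{V}$ to absorb half of $\int(|\mathcal{S}^A|^2+|\mathcal{J}^u|^2)\bhk_{(X,T)}\zeta_r^2\,\D x$ into the left-hand side, and discarding the non-positive $W$-term yields the differential inequality
\begin{align*}
\tfrac{\D}{\D t}\int e\,\bhk_{(X,T)}\zeta_r^2\,\D x+\tfrac{1}{2}\int\bigl(|\mathcal{S}^A|^2+|\mathcal{J}^u|^2\bigr)\bhk_{(X,T)}\zeta_r^2\,\D x\leq C(n,\eta)\int e\,\bhk_{(X,T)}|\nabla\zeta_r|^2\,\D x.
\end{align*}

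For (\ref{scalebound}), integrate this over $(T-r^2/(4\pi),T)$; since $\bhk_{(X,T)}>r^{-(n-4)}$ and $\zeta_r\equiv 1$ on $\hb{r}(X,T)$, the second LHS term bounds $\tfrac{1}{2}r^{-(n-4)}\iint_{\hb{r}(X,T)}(|\mathcal{S}^A|^2+|\mathcal{J}^u|^2)$ from above. The RHS is controlled via $\bhk_{(X,T)}|_{T-r^2/(4\pi)}\leq r^{-(n-4)}$ (the Gaussian factor being at most $1$ at that time) and $\bhk_{(X,T)}\leq C(n)r^{-(n-4)}$ on the annular support of $\nabla\zeta_r$ at all $t$ in the time range; the latter follows from the substitution $s=|x-X|^2/(4(T-t))$, which rewrites $\bhk_{(X,T)}$ as $\pi^{-(n-4)/2}|x-X|^{-(n-4)}s^{(n-4)/2}e^{-s}$ with $s^{(n-4)/2}e^{-s}$ uniformly bounded for $s>0$. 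Multiplying through by $2r^{n-4}$ gives (\ref{scalebound}).

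For (\ref{energybound}), drop the non-negative $(|\mathcal{S}^A|^2+|\mathcal{J}^u|^2)$-integral and integrate from $T-r^2/(4\pi)$ to a general $t$ in the admissible range; then use $r^{-(n-4)}\int_{B_{R_r(t-T)}(X)}e(A,u)|_t\,\D x\leq\int e\,\bhk_{(X,T)}\zeta_r^2|_t\,\D x$ (valid since $\bhk_{(X,T)}\geq r^{-(n-4)}$ and $\zeta_r\equiv 1$ on $B_{R_r(t-T)}(X)$) together with the same pointwise estimates on $\bhk_{(X,T)}$ as before; the explicit constant $(4\pi)^{(n-4)/2}\exp(1/4)$ follows from a direct computation using the formula for $R_r(t-T)$ under the constraint $T-t<e^{-1/(2(n-4))}r^2/(4\pi)$. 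The principal technical obstacle is the derivation of the pointwise identity, which requires a delicate completion-of-squares to assemble $|\mathcal{S}^A|^2\bhk_{(X,T)}$ and $|\mathcal{J}^u|^2\bhk_{(X,T)}$ out of $|\partial_t A|^2$, $|\partial_t u|^2$, cross terms with $\nabla\bhk_{(X,T)}$, and algebraic expressions; the pointwise $\bhk_{(X,T)}$-estimates on the annulus via $s=|x-X|^2/(4(T-t))$ are the other key technical ingredient.
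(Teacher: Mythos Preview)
Your treatment of (\ref{scalebound}) is essentially the paper's argument: a localized version of Hong's identity tested against a squared radial cutoff, followed by absorption via Young's inequality and the annular bound $\bhk_{(X,T)}\leq C(n)r^{-(n-4)}$ outside $B_{c_nr}(X)$. One correction: the flux $\mathcal{V}$ cannot satisfy the bound you state. If you compare your pointwise identity, multiplied by $\zeta_r^2$ and integrated, with the paper's cutoff identity (i.e.\ Hong's formula with an inserted $\varphi^2$), you will find an extra ``drift'' term $2\int\bhk_{(X,T)}\zeta_r\,e(A,u)\sum_k\frac{(x-X)^k}{2(t-T)}\partial_k\zeta_r$; equivalently, $\mathcal{V}$ must carry a piece of size $e(A,u)\,\bhk_{(X,T)}\,\frac{|x-X|}{2(T-t)}$ in addition to the $|\mathcal{S}^A||F|$ and $|\mathcal{J}^u||\slashed{\nabla}u|$ pieces. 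This is harmless for the final inequality---on the annulus one also has $\frac{r^2}{T-t}\bhk_{(X,T)}\leq C(n)r^{-(n-4)}$, so the extra contribution is again $\lesssim r^{-(n-2)}\!\int_{B_{2c_nr}}e$---but the stated bound on $|\mathcal{V}|$ is false as written.

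For (\ref{energybound}) there is a genuine gap. Your lower bound $\bhk_{(X,T)}\geq r^{-(n-4)}$ on $B_{R_r(t-T)}(X)$ yields only
\[
\frac{1}{r^{n-4}}\int_{B_{R_r(t-T)}(X)}e(A,u)(\cdot,t)\;\leq\;\int e\,\bhk_{(X,T)}\zeta_r^2\big|_t,
\]
whereas the estimate to be proved has $R_r(t-T)^{-(n-4)}$ on the left. Since $R_r(t-T)\to 0$ as $t\nearrow T$, the ratio $(r/R_r(t-T))^{n-4}$ is unbounded on the admissible time interval, and no fixed constant such as $(4\pi)^{(n-4)/2}\exp(\tfrac14)$ can close the gap. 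The missing idea is that the localized Hong identity must be used with a \emph{variable} time centre $s\geq T$ in the kernel, not with $s=T$. The paper takes $s=t+R_r(t-T)^2$; the restriction $T-t<e^{-1/(2(n-4))}\frac{r^2}{4\pi}$ is exactly the condition $s>T$. Then $s-t=R_r(t-T)^2$, so on $B_{R_r(t-T)}(X)$ one has
\[
\bhk_{(X,s)}(x,t)=\frac{1}{(4\pi R_r(t-T)^2)^{(n-4)/2}}\exp\!\Bigl(-\tfrac{|x-X|^2}{4R_r(t-T)^2}\Bigr)\;\geq\;\frac{\exp(-\tfrac14)}{(4\pi)^{(n-4)/2}R_r(t-T)^{n-4}},
\]
which is precisely the lower bound needed; the right-hand side is controlled as before because the annular estimates for $\bhk_{(X,s)}$ are uniform in $s\geq T$. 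Without this shift of the kernel's time centre, the argument cannot produce (\ref{energybound}).
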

\begin{proof}
It may be shown using the methods of \cite{hong1998monotonicity} that
\begin{equation}
	\begin{split}&\frac{\D}{\D t}\int_{\mathbb{R}^{n}}\left(e(A,u)\cdot\bhk_{(X,s)}\cdot\varphi^{2}\right)(x,t)\D x\\
	&=-\int_{\mathbb{R}^{n}}\left(\left[\sum_{j}\left|\mathcal{S}^{A}_{j}\right|^{2} + \left|\mathcal{J}^{u}\right|^{2}\right]\bhk_{(X,s)}\varphi^{2}\right)(x,t)\D x\\
       &\qquad -\int_{\mathbb{R}^{n}}\left(\frac{\sum_{i}|\slashed{\nabla}_{i}u|^{2} + 4W\circ |u|^{2}}{2(s-t)}\cdot\bhk_{(X,s)}\varphi^{2}\right)(x,t)\D x\\
	&\qquad +2\int_{\mathbb{R}^{n}}\left(\bhk_{(X,s)}\cdot\varphi\cdot e(A,u)\cdot\left[\partial_{t}\varphi + \sum_{k}\frac{(x-X)^{k}}{2(t-s)}\partial_{k}\varphi\right]\right)(x,t)\D x\\
	&\qquad -2\int_{\mathbb{R}^{n}}\left(\bhk_{(X,s)}\cdot\varphi\cdot\left[\sum_{j}\left<\sum_{k}\partial_{k}\varphi\cdot F_{kj},\mathcal{S}^{A}_{j}\right>+\left<\sum_{k}\partial_{k}\varphi\cdot\slashed{\nabla}_{k}u,\mathcal{J}^{u}\right>\right]\right)(x,t)\D x\end{split}\label{cutoffmon}
\end{equation}
holds on $\left]0,T\right[$, where $s\geq T$, $\varphi\in C^{\infty}(\mathbb{R}^{n}\times\mathbb{R},\mathbb{R})$ is such that $\varphi(\cdot,t)\in C_{0}^{\infty}(\mathbb{R}^{n})$ for each $t\in\mathbb{R}$. We first take $\varphi$ to be such that
\begin{align*}
	\varphi(x,t)=\begin{cases}\eta(\frac{c_{n}r}{|x-X|}),&x\neq 0\\ 1,&x=0  \end{cases}
\end{align*}
with $\eta$ and $r$ as in the statement of the lemma and $c_{n}$ as in \S\ref{scaling}. It is then clear that for all $t\in\mathbb{R}$, $\chi_{B_{R_{r}(t-T)}(X)}\leq\chi_{B_{c_{n}r}(X)}\leq \varphi(\cdot,t)\leq \chi_{B_{2c_{n}r}(X)}$, $|\nabla\varphi|(\cdot,t)\leq \frac{||\eta'||_{\infty}}{c_{n}r}\cdot\chi_{B_{2c_{n}r}(X)\backslash B_{c_{n}r}(X)}$ and $\partial_{t}\varphi\equiv 0$, where the $\chi_{\cdot}$ are characteristic functions and $||\eta'||_{\infty}=\sup |\eta'|$.

Now, by the Cauchy-Schwarz inequality and Young's inequality, it is clear that
\begin{align*}
	\left|\varphi\cdot\left[\sum_{j}\left<\sum_{k}\partial_{k}\varphi\cdot F_{kj},\mathcal{S}^{A}_{j}\right>\right]\right|\leq |\nabla\varphi|^{2}\cdot\sum_{i<j}|F_{ij}|^{2} + \frac{1}{4}\varphi^{2}\sum_{j}\left|\mathcal{S}^{A}_{j}\right|^{2}
\end{align*}
and
\begin{align*}
	\left|\varphi\cdot\left[\left<\sum_{k}\partial_{k}\varphi\cdot\slashed{\nabla}_{k}u,\mathcal{J}^{u}\right>\right]\right|\leq |\nabla\varphi|^{2}\cdot\sum_{i}|\slashed{\nabla}_{i}u|^{2} + \frac{1}{4}\varphi^{2}\left|\mathcal{J}^{u}\right|^{2}
\end{align*}
so that incorporating these inequalities into (\ref{cutoffmon}), discarding the second integral on the right-hand side and applying the Cauchy-Schwarz inequality to the third integrand, we obtain
\begin{equation}
	\begin{split}
	&\frac{\D}{\D t}\int_{\mathbb{R}^{n}}e(A,u)\cdot\bhk_{(X,s)}\cdot\varphi^{2}\D x\\
	&\hskip 2mm \leq-\frac{1}{2}\int_{\mathbb{R}^{n}}\left(\sum_{j}\left|\mathcal{S}^{A}_{j}\right|^{2} + \left|\mathcal{J}^{u}\right|^{2}\right)\bhk_{(X,s)}\varphi^{2}\D x\\
	&\hskip 10mm +\int_{\mathbb{R}^{n}}\bhk_{(X,s)}\cdot e(A,u)\cdot\left[4|\nabla\varphi|^{2}+ 2\varphi\left|\frac{x-X}{2(t-s)}\right|\cdot\left|\nabla\varphi\right|\right]\D x.
\end{split}\label{simpcutoff}
\end{equation}
Using the bounds on $\varphi$ and $\nabla\varphi$ and the fact that
\begin{align*}
	\max\left\{\bhk_{(X,s)},\frac{r^{2}}{s-t}\bhk_{(X,s)}  \right\}\leq \frac{\const(n)}{r^{n-4}}
\end{align*}
on $\left(\mathbb{R}^{n}\backslash B_{c_{n}r}(X)\right)\times\left]-\infty,s\right[$, we see that
\begin{multline*}
	\bhk_{(X,s)}\cdot\left[4|\nabla\varphi|^{2} + 2\varphi\left|\frac{x-X}{2(t-s)}\right|\cdot|\nabla\varphi|\right]\\
	\leq \frac{\const(n)}{r^{n-2}}\cdot\left(4||\eta'||_{\infty}^{2} + 2c_{n}||\eta'||_{\infty} \right)\chi_{B_{2c_{n}r}(X)}=:\frac{\widetilde{c}(n,\eta)}{r^{n-2}}\chi_{B_{2c_{n}r}(X)}
\end{multline*}
whence, integrating (\ref{simpcutoff}) on $\left]T-\frac{r^{2}}{4\pi},t\right[$ with $t\in\left]T-\frac{r^{2}}{4\pi},T\right[$ and using the bounds on $\varphi$, we arrive at
\begin{multline*}
	\int_{\mathbb{R}^{n}}\left(e(A,u)\cdot\bhk_{(X,s)}\cdot\varphi^{2}\right)(x,t)\D x + \frac{1}{2}\int_{T-\frac{r^{2}}{4\pi}}^{t}\int_{\mathbb{R}^{n}}\left(\sum_{j}\left|\mathcal{S}^{A}_{j}\right|^{2} + \left|\mathcal{J}^{u}\right|^{2}\right)\bhk_{(X,s)}\varphi^{2}\D x\D t\\
	\leq \frac{\widetilde{c}(n,\eta)}{r^{n-2}}\int_{T-\frac{r^{2}}{4\pi}}^{t}\int_{B_{2c_{n}r}(X)}e(A,u)\D x\D t + \frac{1}{r^{n-4}}\int_{\mathbb{R}^{n}}e(A,u)(x,T-\frac{r^{2}}{4\pi})\D x,
\end{multline*}
where the bound $\bhk_{(X,s)}(x,T-\frac{r^{2}}{4\pi})\leq \frac{1}{r^{n-4}}$ was also used.

Now, by setting $s=T$, discarding the first term on the left-hand side, using the bound $\varphi(\cdot,t)\geq\chi_{B_{R_{r}(t-T)}(X)}$ and the fact that $\left.\bhk_{(X,T)}\right|_{B_{R_{r}(t-T)}(X)} > \frac{1}{r^{n-4}}$, we immediately obtain (\ref{scalebound}) after passing to the limit $t\nearrow T$, making use of the monotone convergence theorem in the process. On the other hand, a quick computation shows that for $t\in \left]T-e^{-\frac{1}{2(n-4)}}\frac{r^{2}}{4\pi},T\right[$,
\begin{align*}
	t + R_{r}(t-T)^{2} > T
\end{align*}
so that restricting our attention to such $t$, taking $s= t+ R_{r}(t-T)^{2}$, discarding the second term on the left-hand side, bounding the first term on the right-hand side by the integral with the same integrand over $B_{2c_{n}r}(X)\times\left]T-\frac{r^{2}}{4\pi},T\right[$ and using the fact that
\begin{align*}
	\left(\bhk_{(X,s)}\cdot\varphi^{2}\right)(x,t)\geq \frac{\exp(-\frac{1}{4})}{(4\pi)^{\frac{n-4}{2}}R_{r}(t-T)^{n-4}}\chi_{B_{R_{r}(t-T)}(X)}
\end{align*}
to bound the first term on the left-hand side from below, we immediately obtain (\ref{energybound}).
\end{proof}
Before proving the local monotonicity formula in greater generality, we recall the following integration-by-parts formula.
\begin{lem}[{\cite[Lemma 1.6]{ecker2005local}}]\label{euclheatball1} If $\xi\in C^{1}(\hb{r_{0}}(X,T),\mathbb{R}^{n})$ for some $r_{0}>0$, then
\begin{align*}
\iint_{\hb{r}(X,T)}(\Div\ \xi)(x,t)\D x\D t=-\frac{r}{n-4}\frac{\D}{\D r}\iint_{\hb{r}(X,T)}\xi(x,t)\cdot\frac{x-X}{2(t-T)}\D x\D t
\end{align*}
on $\left]0,r_{0}\right[$ whenever these integrals exist.
\end{lem}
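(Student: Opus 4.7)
My plan is to exploit the structure of $\hb{r}(X,T)$ as a superlevel set of $G:=\log\bhk_{(X,T)}$ and combine the divergence theorem in spacetime with the coarea formula. A direct computation gives $\nabla_{x}G(x,t)=(x-X)/(2(t-T))$, so the integrand on the right-hand side of the claim is precisely $\xi\cdot\nabla_{x}G$; moreover, $\hb{r}(X,T)=\{G>-(n-4)\log r\}$ is an open set in spacetime whose boundary, away from the vertex $(X,T)$, is a smooth hypersurface with outward unit normal $\nu=-\nabla_{(x,t)}G/|\nabla_{(x,t)}G|$.

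The first step is to apply the divergence theorem in $\mathbb{R}^{n+1}$ to the vector field $(\xi,0)$ on $\hb{r}(X,T)$, whose spacetime divergence coincides with $\Div\xi$; this yields
\begin{align*}
\iint_{\hb{r}(X,T)}(\Div\ \xi)(x,t)\D x\D t=-\int_{\partial\hb{r}(X,T)}\frac{\xi\cdot\nabla_{x}G}{|\nabla_{(x,t)}G|}\D\mathcal{H}^{n}.
\end{align*}
The second step is to apply the coarea formula to $\iint_{\hb{r}(X,T)}\xi\cdot\nabla_{x}G\,\D x\D t$; the substitution $c=-(n-4)\log\rho$ transforms the resulting one-dimensional integral over level sets of $G$ into
\begin{align*}
\iint_{\hb{r}(X,T)}\xi\cdot\nabla_{x}G\,\D x\D t=\int_{0}^{r}\frac{n-4}{\rho}\int_{\partial\hb{\rho}(X,T)}\frac{\xi\cdot\nabla_{x}G}{|\nabla_{(x,t)}G|}\D\mathcal{H}^{n}\,\D\rho.
\end{align*}
Differentiating in $r$ produces $(n-4)/r$ times the surface integral already appearing in the divergence-theorem identity, and combining the two yields the claim.

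The main technical difficulty is that the vertex $(X,T)$ is a limit point of $\partial\hb{r}(X,T)$ at which $|\nabla_{(x,t)}G|$ diverges and at which $\xi$ may itself be singular, so strictly speaking neither the divergence theorem nor the coarea formula applies across this point. I would circumvent this by carrying out the above manipulations on the regularised region $\hb{r}(X,T)\setminus B_{\epsilon}(X,T)$, where $B_{\epsilon}(X,T)$ denotes the Euclidean ball in $\mathbb{R}^{n+1}$ of radius $\epsilon$ centred at $(X,T)$, and then passing to the limit $\epsilon\searrow 0$. The integrability hypothesis in the statement is precisely what is needed to ensure that the contribution from $\partial B_{\epsilon}(X,T)\cap\hb{r}(X,T)$ vanishes in this limit, leaving the asserted equality.
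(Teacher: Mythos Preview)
The paper does not supply its own proof of this lemma; it is quoted verbatim from \cite[Lemma 1.6]{ecker2005local} and used as a black box in the proof of Theorem~\ref{locmon}. Your argument---expressing $\hb{r}(X,T)$ as the superlevel set $\{G>-(n-4)\log r\}$ with $G=\log\bhk_{(X,T)}$, applying the spacetime divergence theorem to $(\xi,0)$, and then invoking the coarea formula with the substitution $c=-(n-4)\log\rho$---is correct and is essentially the approach taken by Ecker in the cited reference.

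One minor comment: your treatment of the vertex $(X,T)$ via excising a small ball $B_{\epsilon}(X,T)$ is the right idea, but the claim that the integrability hypothesis alone forces the boundary contribution on $\partial B_{\epsilon}\cap\hb{r}$ to vanish is slightly glib. A cleaner route, which avoids the vertex altogether, is to carry out the divergence theorem and the coarea formula slice by slice: for each fixed $t\in\left]T-\tfrac{r^{2}}{4\pi},T\right[$ the spatial cross-section is the genuine ball $B_{R_{r}(t-T)}(X)$, on which the usual divergence theorem in $\mathbb{R}^{n}$ applies without qualification since $\xi(\cdot,t)\in C^{1}$ up to its boundary; integrating in $t$ afterwards then requires only Fubini and the assumed summability. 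Either way the result is the same.
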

\begin{thm}\label{locmon} If $\{(A(\cdot,t),u(\cdot,t))\}_{t\in\left[a,T\right[}$ evolves by the Yang-Mills-Higgs flow and $e(A,u)\in L^{1}\left(B_{2c_{n}r_{0}}(X)\times\left]T-\frac{r_{0}^{2}}{4\pi},T\right[\hskip 0.1mm \right)$ for some $r_{0}\leq\sqrt{4\pi(T-a)}$, then
	\begin{align*}
		&\frac{\D}{\D r}\left(\frac{1}{r^{n-4}}\iint_{\hb{r}(X,T)}e(A,u)(x,t)\cdot\frac{n-4}{2(T-t)}\right. \\
	       	&\hskip 20mm \left.\vphantom{\iint_{\hb{r}}\sum_{i=1}^{n}} - \sum_{j=1}^{n}\left<\sum_{i=1}^{n}\frac{(x-X)^{i}}{2(t-T)}F_{ij},\mathcal{S}^{A}_{j}\right>(x,t)
		- \left<\sum_{i=1}^{n}\frac{(x-X)^{i}}{2(t-T)}\slashed{\nabla}_{i}u,\mathcal{J}^{u}\right>(x,t)\D x\D t\right)\\
		&=\frac{n-4}{r^{n-3}}\iint_{\hb{r}(X,T)}\left(\frac{\sum_{i=1}^{n}|\slashed{\nabla}_{i}u|^{2} + 4W\circ |u|^{2}}{2(T-t)} + \sum_{j=1}^{n}|\mathcal{S}^{A}_{j}|^{2} + |\mathcal{J}^{u}|^{2}\right)(x,t)\D x\D t
	\end{align*}
	on $\left]0,r_{0}\right[$.
\end{thm}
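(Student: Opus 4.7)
The plan is to establish first a useful intermediate representation of the LHS, namely
\begin{align*}
\frac{1}{r^{n-4}}\iint_{\hb{r}(X,T)}V(x,t)\,\D x\D t = \iint_{\hb{r}(X,T)}G(x,t)\bigl(\bhk_{(X,T)}(x,t) - r^{-(n-4)}\bigr)\,\D x\D t,
\end{align*}
where $V$ denotes the integrand on the LHS of the theorem and $G$ the (nonnegative) integrand on the RHS without the $\frac{n-4}{r^{n-3}}$ prefactor; once this representation is in place, the theorem follows by differentiating in $r$ and exploiting the fact that $\bhk_{(X,T)} \equiv r^{-(n-4)}$ on $\partial\hb{r}(X,T)$.

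For the representation I would apply (\ref{cutoffmon}) with $s = T$ and the Lipschitz weight $\varphi^{2} = (1 - r^{-(n-4)}/\bhk_{(X,T)})_{+}$, which is nonnegative, supported on $\hb{r}(X,T)$, and vanishes on its boundary (a standard mollification $\varphi\rightsquigarrow \varphi_{\epsilon}$ legitimises its use in (\ref{cutoffmon}), with the limit controlled via Lemma \ref{estimates}). On $\hb{r}(X,T)$ one verifies the algebraic relations $\bhk_{(X,T)}(1-\varphi^{2}) = r^{-(n-4)}$, $\partial_{t}\varphi^{2} = (1-\varphi^{2})\partial_{t}\log\bhk_{(X,T)}$, and $\partial_{k}\varphi^{2} = (1-\varphi^{2})(x-X)^{k}/(2(t-T))$; combined with the key pointwise identity
\begin{align*}
\partial_{t}\log\bhk_{(X,T)} + |\nabla_{x}\log\bhk_{(X,T)}|^{2} = \frac{n-4}{2(T-t)},
\end{align*}
the last two right-hand-side terms of (\ref{cutoffmon}) collapse precisely to $r^{-(n-4)}V$ pointwise on $\hb{r}(X,T)$---accounting exactly for the $\frac{n-4}{2(T-t)}$ weight and the correction terms present in $V$. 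Integrating the resulting identity in $t$ over $\left]T - r^{2}/4\pi, T\right[$ yields the representation, provided the two boundary contributions in $t$ vanish: the lower trivially, since the $t$-slice of $\hb{r}(X,T)$ at $T-r^{2}/4\pi$ is empty; the upper by an argument combining the $L^{1}$ hypothesis with the bound (\ref{energybound}) of Lemma \ref{estimates}.

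The proof is concluded by a direct coarea-formula computation of the $r$-derivative of the representation's right-hand side. For any nonnegative integrable $f$, the derivatives $\frac{\D}{\D r}\iint_{\hb{r}(X,T)}f\bhk_{(X,T)}\,\D x\D t$ and $\frac{\D}{\D r}[r^{-(n-4)}\iint_{\hb{r}(X,T)}f\,\D x\D t]$ yield the same boundary integral over $\partial\hb{r}(X,T)$ (owing to $\bhk_{(X,T)} \equiv r^{-(n-4)}$ there), but the latter contributes additionally $-\frac{n-4}{r^{n-3}}\iint_{\hb{r}(X,T)}f\,\D x\D t$; on subtracting, the boundary terms cancel and one obtains $\frac{\D}{\D r}\iint_{\hb{r}(X,T)}f(\bhk_{(X,T)} - r^{-(n-4)})\,\D x\D t = \frac{n-4}{r^{n-3}}\iint_{\hb{r}(X,T)}f\,\D x\D t$. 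Applying this identity with $f = G$ and invoking the representation completes the proof. The main technical hurdles will be the smooth approximation of the Lipschitz cutoff $\varphi$ and the verification that $\int_{\hb{r}(X,T)\cap\{t=\tau\}}e(A,u)(\bhk_{(X,T)} - r^{-(n-4)})\,\D x \to 0$ as $\tau \to T^{-}$; both are resolved by the scale-invariant estimates of Lemma \ref{estimates}.
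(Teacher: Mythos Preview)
Your approach is genuinely different from the paper's and, with one important correction, it works. The paper proceeds by parabolic rescaling: it writes $\frac{1}{r^{n-4}}\iint_{E_r}e\cdot\frac{n-4}{2(T-t)}$ as an integral over the fixed ball $E_1$ with integrand depending on $(A^r,u^r)$, differentiates that integrand explicitly (using the Bianchi identity, \eqref{vcurv}, \eqref{dotadj} and the flow equations) to produce a divergence $\Div Y$ plus the terms $|\mathcal{S}^A|^2$, $|\mathcal{J}^u|^2$, etc., and then invokes Lemma~\ref{euclheatball1} to convert the divergence into the correction terms on the left-hand side. Your route instead starts from the localized Hong identity \eqref{cutoffmon}, chooses the weight $\varphi^2=(1-r^{-(n-4)}/\Phi)_+$ so that all the curvature algebra is already absorbed in \eqref{cutoffmon}, and finishes with a coarea differentiation. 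This is cleaner in that the Bianchi computation never appears explicitly, and the identity $\frac{\D}{\D r}\iint_{E_r}G(\Phi-r^{-(n-4)})=\frac{n-4}{r^{n-3}}\iint_{E_r}G$ is a pleasant replacement for Lemma~\ref{euclheatball1}.

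The gap is your claim that the upper boundary contribution $\int_{B_{R_r(\tau-T)}}e(A,u)(\Phi-r^{-(n-4)})\,\D x\to 0$ as $\tau\to T^-$. This is false in general: for a solution forming a self-similar singularity at $(X,T)$ (cf.\ Proposition~\ref{speciallocmon}), one has $G\equiv 0$ and $V=e(A,u)\frac{n-4}{2(T-t)}$, so your representation would force $r^{-(n-4)}\iint_{E_r}e\cdot\frac{n-4}{2(T-t)}=0$, contradicting Proposition~\ref{speciallocmon}. A direct application of \eqref{energybound} only gives the useless bound $\lesssim(\log\frac{1}{T-\tau})^{(n-4)/2}$. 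What \emph{is} true—and what \eqref{energybound} does deliver—is that the limit $L(r):=\lim_{\tau\to T^-}\int e(\Phi-r^{-(n-4)})|_{t=\tau}$ exists and is \emph{independent of $r$}: since the integrand vanishes on $\partial E_r$, one has $\partial_r\bigl(\int e(\Phi-r^{-(n-4)})|_{t=\tau}\bigr)=(n-4)r^{-(n-3)}\int_{B_{R_r(\tau-T)}}e$, and by \eqref{energybound} this is $\lesssim r^{-(n-3)}R_r(\tau-T)^{n-4}\to 0$ locally uniformly in $r$. With the corrected representation $L+\iint_{E_r}G(\Phi-r^{-(n-4)})=r^{-(n-4)}\iint_{E_r}V$ and $L'\equiv 0$, your coarea step goes through verbatim and yields the theorem. (Alternatively, one may sidestep the issue entirely by inserting the temporal cutoff $\eta_k$ exactly as the paper does in its final paragraph.)
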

\begin{rmk}\label{summrmk}
To see how the summability condition on $e(A,u)$ guarantees the finiteness of the integrals occurring in the theorem, note first that the latter two terms in the integrand on the right-hand side are clearly summable due to the estimate (\ref{scalebound}). On the other hand, the first term in the integrand on the right hand side may be bounded from above by $\frac{4e(A,u)(x,t)}{2(T-t)}$ and the latter two terms in the integrand on the left-hand side in modulus by
\begin{align*}
	\frac{|x-X|^{2}}{4(t-T)^{2}}e(A,u)(x,t) + \frac{1}{2}\left(\sum_{j=1}^{n}|\mathcal{S}^{A}_{j}|^{2} + |\mathcal{J}^{u}|^{2}\right)(x,t)
\end{align*}
whence, by the estimate (\ref{energybound}), we see that the finiteness of the left-hand integral is guaranteed by the finiteness of the integrals
\begin{align*}
	\int_{T-e^{-\frac{1}{2(n-4)}}\frac{r_{0}^{2}}{4\pi}}^{T}\frac{R_{r_{0}}(t-T)^{n-2}}{(t-T)^{2}}\D t
\end{align*}
and
\begin{align*}
\int_{T-e^{-\frac{1}{2(n-4)}}\frac{r_{0}^{2}}{4\pi}}^{T}\frac{R_{r_{0}}(t-T)^{n-4}}{T-t}\D t,
\end{align*}
since $\overline{\hb{r_{0}}(X,T)}\cap\left(\mathbb{R}^{n}\times\left[T-\frac{r_{0}^{2}}{4\pi}, T-e^{-\frac{1}{2(n-4)}}\frac{r_{0}^{2}}{4\pi}\right]\right)$ is a compact subset of the domain of $A$ and $u$. Altogether, we have an estimate of the form
\begin{multline*}
\frac{1}{r^{n-4}}\iint_{\hb{r}(X,T)}e(A,u)(x,t)\cdot\frac{n-4}{2(T-t)}\\
- \sum_{j=1}^{n}\left<\sum_{i=1}^{n}\frac{(x-X)^{i}}{2(t-T)}F_{ij},\mathcal{S}^{A}_{j}\right>(x,t) -\left<\sum_{i=1}^{n}\frac{(x-X)^{i}}{2(t-T)}\slashed{\nabla}_{i}u,\mathcal{J}^{u}\right>(x,t)\D x\D t\\
\leq \gamma(n,\eta)\left(\frac{1}{r^{n-2}}\int_{T-\frac{r^{2}}{4\pi}}^{T}\int_{B_{2c_{n}r}(X)}e(A,u)(x,t)\D x\D t\right.\\
		\left. + \frac{1}{r^{n-4}}\int_{B_{2c_{n}r}(X)}e(A,u)(x,T-\frac{r^{2}}{4\pi})\D x\right)
\end{multline*}
with $\gamma(n,\eta)$ a positive constant depending only on $n$ and $\eta$.
\end{rmk}
\begin{rmk} Note that the right-hand side of the local monotonicity formula vanishes precisely when $A$ is self similar about $(X,T)$ (in the appropriate gauge), $\slashed{\nabla}_{i}u\equiv 0$ for all $i\in\{1,\dots,n\}$, $W\circ |u|^{2}\equiv 0$ and $\partial_{t}u\equiv 0$ (cf. Proposition \ref{speciallocmon}); such conditions on $u$ hold e.g. if the following conditions are satisfied:
	\begin{enumerate}
		\item $W$ is a \textit{Higgs-like potential}, viz. $\mathcal{W}=\{x\in\mathbb{R}:W(x^{2})=W'(x^{2})=0\}\neq\emptyset$,
		\item$u$ is a parallel section, i.e. $\slashed{\nabla}_{i}u\equiv 0$ for all $i\in\{1,\dots, n\}$ and
		\item $|u|\in\mathcal{W}$.
	\end{enumerate}	
	In fact, if a solution to (YMHF) of the form $\{(0,u(\cdot,t)\}$ satisfies these three conditions, $u$ is then independent of $t$ and is said to be a \textit{Higgs equilibrium}. Thus, the right-hand side of the monotonicity formula vanishes on such solutions.
\end{rmk}
\begin{proof}[Proof of Theorem \ref{locmon}.] By translation invariance of the equations (YMHF), we may without loss of generality assume that $(X,T)=(0,0)$ and that $A(\cdot,t)$ and $u(\cdot,t)$ are defined for $t\in \left]a-T,0\right[$.\\
	We first proceed formally. By making the change of variables $(x,t)=(ry,r^{2}s)$, it is clear that	
	\begin{equation}
		\begin{split}
		&\frac{\D}{\D r}\left(\frac{1}{r^{n-4}}\iint_{\hb{r}}e(A,u)(x,t)\cdot\frac{n-4}{-2t}\D x\D t \right)\\
		&=(n-4)\frac{\D}{\D r}\iint_{\hb{1}}\frac{\left(\frac{1}{4}\sum_{i,j}|F_{ij}^{r}|^{2} + \frac{r^{2}}{2}\sum_{i=1}^{n}|\slashed{\nabla}_{i}^{r}u^{r}|^{2} + r^{4}W\circ|u^{r}|^{2}  \right)(y,s)}{-2s}\D y\D s
	\end{split}\label{firstdiff}\end{equation}
	where $\slashed{\nabla}^{r}$ and $F^{r}$ are as in \S\ref{scaling}. Note that
	\begin{equation}
		\begin{split}
		&\frac{\D}{\D r} \left(\frac{1}{4}\sum_{i,j}|F_{ij}^{r}|^{2} + \frac{r^{2}}{2}\sum_{i=1}^{n}|\slashed{\nabla}_{i}^{r}u^{r}|^{2} + r^{4}W\circ|u^{r}|^{2}  \right)(y,s)\cdot \frac{1}{-2s}\\
		&=\frac{r^{5}}{-2t}\left.\frac{\D}{\D r}\right|_{r=1}\left( \frac{1}{4}\sum_{i,j}|F_{ij}^{r}|^{2} + \frac{r^{2}}{2}\sum_{i=1}^{n}|\slashed{\nabla}_{i}^{r}u^{r}|^{2} + r^{4}W\circ|u^{r}|^{2}  \right)(x,t)
	\end{split}\label{scaledintegrand}
	\end{equation}
	We now proceed to relate this expression to the right-hand integrand in the theorem.

	Firstly, by (\ref{gcompat}),
\begin{align*}
	\left.\frac{\D}{\D r}\right|_{r=1}\left(\frac{1}{4}\sum_{i,j}|F_{ij}^{r}|^{2}\right)(x,t)&=\sum_{i,j}\left(|F_{ij}|^{2} + \frac{1}{2}\left<F_{ij},2t\partial_{t}F_{ij} + \sum_{k}x^{k}\nabla_{k}F_{ij}\right>\right)(x,t).
\end{align*}
It is clear from the definition of $F_{ij}$ that
\begin{align*}
	\partial_{t}F_{ij}&=\nabla_{i}(\partial_{t}A_{j})-\nabla_{j}\partial_{t}A_{i}
\end{align*}
and it follows immediately from (\ref{bianchi}) and the antisymmetry of $F_{ij}$ that
\begin{align*}
	\nabla_{k}F_{ij}&=\nabla_{i}F_{kj}+\nabla_{j}F_{ik}.
\end{align*}
Therefore, using the antisymmetry of $F_{ij}$, we see that
\begin{align*}
	\left.\frac{\D}{\D r}\right|_{r=1}\left(\frac{1}{4}\sum_{i,j}|F_{ij}^{r}|^{2}\right)(x,t)&=\sum_{i,j}\left(|F_{ij}|^{2} + \left<F_{ij},2t\nabla_{i}\partial_{t}A_{j} + \sum_{k}x^{k}\nabla_{i}F_{kj}\right> \right)(x,t)\\
	&=\Div\left(\sum_{i,j}\left<F_{ij},2t\mathcal{S}^{A}_{j}\right>e_{i}\right)(x,t)- \sum_{j}\left<\sum_{i}\nabla_{i}F_{ij},2t\mathcal{S}^{A}_{j}\right>(x,t),
\end{align*}
where (\ref{gcompat}) was used in the last line. Similarly, using (\ref{vcompat}), the definition of $\slashed{\nabla}_{i}u$ and (\ref{vcurv}), it is easy to see that
\begin{align*}
	&\left.\frac{\D}{\D r}\right|_{r=1}\left(\frac{r^{2}}{2}\sum_{i=1}^{n}|\slashed{\nabla}_{i}^{r}u^{r}|^{2}\right)(x,t)\\
	&=\sum_{i}\left(|\slashed{\nabla}_{i}u|^{2} + \left<\slashed{\nabla}_{i}u,2t\slashed{\nabla}_{i}u+\sum_{k}x^{k}\slashed{\nabla}_{k}u\slashed{\nabla}_{i}u\right>\right)(x,t)\\
	&=\sum_{i}\left(|\slashed{\nabla}_{i}u|^{2}+\left<\slashed{\nabla}_{i}u,2t\slashed{\nabla}_{i}\partial_{t}u+2t(\partial_{t}A_{i})\cdot u + \sum_{k}x^{k}\slashed{\nabla}_{i}\slashed{\nabla}_{k}u+x^{k}F_{ki}\cdot u\right>   \right)(x,t)\\
	&=\left(\Div\left(\sum_{i}\left<\slashed{\nabla}_{i}u,2t\mathcal{J}^{u}\right>e_{i}\right) + \sum_{i}\left<u\odot\slashed{\nabla}_{i}u,2t\mathcal{S}^{A}_{i}\right>- \left<\sum_{i}\slashed{\nabla}_{i}^{2}u,2t\mathcal{J}^{u}\right> + \sum_{i}|\slashed{\nabla}_{i}u|^{2}\right)(x,t),
\end{align*}
where in the last line (\ref{dotadj}) was also used. Finally, it is clear from (\ref{gcompat}) that
\begin{align*}
	\left.\frac{\D}{\D r}\right|_{r=1}\left(r^{4}W\circ |u^{r}|^{2}  \right)(x,t)&=\left(4W\circ|u|^{2} + \left<2(W'\circ|u|^{2})u,2t\mathcal{J}^{u}\right>\right)(x,t),
\end{align*}
whence, after making use of the equations (\ref{ymhf}), we see that (\ref{scaledintegrand}) is equal to
\begin{align*}
	r^{5}\cdot\left(\sum_{j}\left<\partial_{t}A_{j},\mathcal{S}^{A}_{j}\right> + \left<\partial_{t}u,\mathcal{J}^{u}\right> + \frac{\sum_{i}|\slashed{\nabla}_{i}u|^{2} + 4W\circ |u|^{2}}{-2t} -\Div\ Y\right)(x,t),
	\end{align*}
	where $\displaystyle Y=\sum_{i}\left(\sum_{j}\left<F_{ij},\mathcal{S}^{A}_{j}\right> + \left<\slashed{\nabla}_{i}u,\mathcal{J}^{u}\right>  \right)e_{i}$. Furthermore, after noting that
	\begin{align}
		Y(x,t)\cdot\frac{x}{2t}&=\left(\sum_{j}\left<\sum_{k}\frac{x^{k}}{2t}F_{kj},\mathcal{S}^{A}_{j}\right> + \left<\sum_{k}\frac{x^{k}}{2t}\slashed{\nabla}_{k}u,\mathcal{J}^{u}\right>\right)(x,t),\label{extraintegrand}
	\end{align}
	we may in fact write (\ref{scaledintegrand}) as
	\begin{align*}
		r^{5}\cdot\left(-\Div\ Y - Y\cdot\frac{x}{2t} + \frac{\sum_{i}|\slashed{\nabla}_{i}u|^{2} + 4W\circ |u|^{2}}{-2t} + \sum_{j}\left|\mathcal{S}^{A}_{j}\right|^{2} + \left|\mathcal{J}^{u}\right|^{2}\right)(x,t).
	\end{align*}
	Thus, interchanging integral and derivative in the equality (\ref{firstdiff}) and changing variables back, we obtain
	\begin{multline*}
	\frac{\D}{\D r}\left(\frac{1}{r^{n-4}}\iint_{\hb{r}}e(A,u)(x,t)\cdot\frac{n-4}{-2t}\D x\D t \right)\\
=\frac{n-4}{r^{n-3}}\iint_{\hb{r}}\left( \frac{\sum_{i}|\slashed{\nabla}_{i}u|^{2} + 4W\circ |u|^{2}}{-2t}+ \sum_{j}\left|\mathcal{S}^{A}_{j}\right|^{2} + \left|\mathcal{J}^{u}\right|^{2}-\Div\ Y - Y\cdot\frac{x}{2t}  \right)(x,t)\D x\D t
	\end{multline*}
	and, by Lemma \ref{euclheatball1},
	\begin{align*}
		\frac{n-4}{r^{n-3}}\iint_{\hb{r}}-(\Div\ Y)(x,t) - Y(x,t)\cdot\frac{x}{2t}\D x\D t&=\frac{\D}{\D r}\left(\frac{1}{r^{n-4}}\iint_{\hb{r}}Y(x,t)\cdot\frac{x}{2t}\D x\D t  \right)
	\end{align*}
whence, by (\ref{extraintegrand}), we have obtained the monotonicity formula, ending our formal computation.

A careful inspection shows that the preceding steps are valid for $r<r_{0}$ provided $\Div\ Y\in L^{1}\left(\hb{r_{0}}(0,0)\right)$, since all of the other terms are summable on $\hb{r_{0}}(0,0)$ by Lemma \ref{estimates} and Remark \ref{summrmk}; this holds e.g. if $A$ is smooth and can be smoothly continued past $t=0$. To drop this assumption, we instead compute analogously to before that
\begin{align*}
	&\frac{\D}{\D r}\left(\frac{1}{r^{n-4}}\iint_{\hb{r}}\left(e(A,u)(x,t)\cdot\frac{n-4}{-2t} - Y(x,t)\cdot \frac{x}{2t}\right)\eta_{k}(t)\D x\D t \right)\\
&=\frac{n-4}{r^{n-3}}\iint_{\hb{r}}\left(\vphantom{\sum_{j}\left|\sum_{k}\right| }\frac{\sum_{i}|\slashed{\nabla}_{i}u|^{2} + 4W\circ |u|^{2}}{-2t} + \sum_{j}\left|\mathcal{S}^{A}_{j}\right|^{2} + \left|\mathcal{J}^{u}\right|^{2}\right)(x,t)\cdot\eta_{k}(t)\D x\D t\\
       &\hskip 50mm+ \frac{n-4}{r^{n-3}}\iint_{\hb{r}}\frac{e(A,u)(x,t)}{-t}\cdot (-t)\eta_{k}'(-t)\D x\D t
\end{align*}
where $\eta_{k}(t):=\eta(2^{k}t)$ and $\eta$ is as in Lemma \ref{estimates}. Hence, after integrating this expression on $\left]r_{1},r_{2}\right[$ with $0<r_{1}<r_{2}<r_{0}$, using the fact that $(-t)\eta_{k}'(-t)\xrightarrow{k\rightarrow\infty}0$ and noting that, by Lemma \ref{estimates}, $\frac{e(A,u)}{-t}$ is summable on $\hb{r_{0}}(0,0)$, we may pass to the limit $k\rightarrow\infty$ and by standard integration theorems conclude that the last term vanishes, leaving us with the desired formula.
\end{proof}
Using the same techniques, it may be shown that if $(A,u)$ is a smooth Yang-Mills-Higgs pair, then
\begin{align*}
	\frac{\D}{\D r}\left(\frac{1}{r^{n-4}}\int_{B_{r}(X)}e(A,u)(x)\D x \right) &= r^{3-n}\int_{B_{r}(X)}\left(\sum_{i}|\slashed{\nabla}_{i}u|^{2} + 4W\circ |u|^{2}\right)(x)\D x\\
	&\hskip 2.5mm + r^{4-n}\int_{\partial B_{r}(X)}\left(\sum_{j}\left|\sum_{i}\nu^{i} F_{ij}\right| + \left|\sum_{i}\nu^{i}\slashed{\nabla}_{i}u\right|^{2}\right)(x)\D S_{x}
\end{align*}
for all $X\in\mathbb{R}^{n}$, where $\nu(x)=\frac{x-X}{r}$. This formula is well known in the case where $A$ is a Yang-Mills connection \cite{price1983monotonicity}; moreover, it is crucial in studying the compactness of suitable spaces of Yang-Mills connections in higher dimensions \cite{nakajima1988compactness}. It is hoped that Theorem \ref{locmon} may analogously yield information on the behaviour of Yang-Mills-Higgs flow pairs $\{(A(\cdot,t),u(\cdot,t)\}_{t\in\left]a,T\right[}$ for $t\nearrow T$ when $T$ is the maximal time of existence of the flow.


\begin{thebibliography}{10}

\bibitem{MR0307015}
William~K. Allard.
\newblock On the first variation of a varifold.
\newblock {\em Ann. of Math. (2)}, 95:417--491, 1972.

\bibitem{MR702806}
M.~F. Atiyah and R.~Bott.
\newblock The {Y}ang-{M}ills equations over {R}iemann surfaces.
\newblock {\em Philos. Trans. Roy. Soc. London Ser. A}, 308(1505):523--615,
  1983.

\bibitem{cs1994monotonicity}
Yunmei Chen and Chun-Li Shen.
\newblock Monotonicity formula and small action regularity for {Y}ang-{M}ills
  flows in higher dimensions.
\newblock {\em Calculus of Variations and Partial Differential Equations},
  2(4):389--403, 1994.

\bibitem{MR1865979}
Klaus Ecker.
\newblock A local monotonicity formula for mean curvature flow.
\newblock {\em Ann. of Math. (2)}, 154(2):503--525, 2001.

\bibitem{ecker2005local}
Klaus Ecker.
\newblock Local monotonicity formulas for some nonlinear diffusion equations.
\newblock {\em Calculus of Variations and Partial Differential Equations},
  23(1):67--81, 2005.

\bibitem{fulks1966mean}
Watson Fulks.
\newblock A mean value theorem for the heat equation.
\newblock {\em Proceedings of the American Mathematical Society}, 17(1):6--11,
  1966.

\bibitem{Hassell1993431}
A.~Hassell.
\newblock The {Y}ang-{M}ills-{H}iggs {H}eat {F}low on $\mathbb{R}^{3}$.
\newblock {\em Journal of Functional Analysis}, 111(2):431 -- 448, 1993.

\bibitem{MR0175554}
Peter~W. Higgs.
\newblock Broken symmetries and the masses of gauge bosons.
\newblock {\em Phys. Rev. Lett.}, 13:508--509, 1964.

\bibitem{hong1998monotonicity}
Min-Chun Hong.
\newblock {\em Monotonicity Formula for Heat Flow for {Y}ang-{M}ills-{H}iggs
  Equations}.
\newblock Mathematics research report. Australian National University, Centre
  for Mathematics and its Applications, School of Mathematical Sciences, 1998.

\bibitem{MR2099188}
Min-Chun Hong and Gang Tian.
\newblock Asymptotical behaviour of the {Y}ang-{M}ills flow and singular
  {Y}ang-{M}ills connections.
\newblock {\em Math. Ann.}, 330(3):441--472, 2004.

\bibitem{MR614447}
Arthur Jaffe and Clifford Taubes.
\newblock {\em Vortices and monopoles}, volume~2 of {\em Progress in Physics}.
\newblock Birkh\"auser, Boston, Mass., 1980.
\newblock Structure of static gauge theories.

\bibitem{MR3143906}
Jia-yu Li and Xi~Zhang.
\newblock Progress on asymptotic behavior of the {Y}ang-{M}ills-{H}iggs flow.
\newblock {\em Appl. Math. J. Chinese Univ. Ser. B}, 28(4):565--574, 2013.

\bibitem{nakajima1988compactness}
H.~Nakajima.
\newblock Compactness of the moduli space of {Y}ang-{M}ills connections in
  higher dimensions.
\newblock {\em Journal of the Mathematical Society of Japan}, 40(3):383--392,
  1988.

\bibitem{price1983monotonicity}
Peter Price.
\newblock A monotonicity formula for {Y}ang-{M}ills fields.
\newblock {\em manuscripta mathematica}, 43(2):131--166, 1983.

\bibitem{MR664498}
Richard Schoen and Karen Uhlenbeck.
\newblock A regularity theory for harmonic maps.
\newblock {\em J. Differential Geom.}, 17(2):307--335, 1982.

\bibitem{struwe1988}
Michael Struwe.
\newblock On the evolution of harmonic maps in higher dimensions.
\newblock {\em J. Differential Geom.}, 28(3):485--502, 1988.

\bibitem{taubes1985}
Clifford~Henry Taubes.
\newblock Min-max theory for the {Y}ang-{M}ills-{H}iggs equations.
\newblock {\em Comm. Math. Phys.}, 97(4):473--540, 1985.

\bibitem{MR0315289}
N.~A. Watson.
\newblock A theory of subtemperatures in several variables.
\newblock {\em Proc. London Math. Soc. (3)}, 26:385--417, 1973.

\bibitem{MR2034580}
Ben Weinkove.
\newblock Singularity formation in the {Y}ang-{M}ills flow.
\newblock {\em Calc. Var. Partial Differential Equations}, 19(2):211--220,
  2004.

\end{thebibliography}
\end{document}